\newtheorem{thm}{Theorem}
\newtheorem*{thm*}{Theorem}
\newtheorem{lem}{Lemma}
\newtheorem{cor}[thm]{Corollary}
\theoremstyle{definition}
\newtheorem{defn}{Definition}
\theoremstyle{remark}
\DeclareMathOperator{\hind}{ind}
\DeclareMathOperator{\Sqe}{Sq_e}
\DeclareMathOperator{\tr}{tr}
\renewcommand{\epsilon}{\varepsilon}
\begin{document}

\title{$Z_2$-index of the grassmanian $G_{2n}^n$}

\author{R.N.~Karasev}
\thanks{This research is supported by the Dynasty Foundation, the President's of Russian Federation grant MK-113.2010.1, the Russian Foundation for Basic Research grants 10-01-00096 and 10-01-00139}

\email{r\_n\_karasev@mail.ru}
\address{
Roman Karasev, Dept. of Mathematics, Moscow Institute of Physics
and Technology, Institutskiy per. 9, Dolgoprudny, Russia 141700}

\keywords{Grassmannian, involution, homology index}

\subjclass[2000]{52A38, 55M35, 55R25, 57S25}

\begin{abstract}
We study the real Grassmann manifold $G_{2n}^n$ (of $n$-subspaces in $\mathbb R^{2n}$), and the action of $Z_2$ on it by taking the orthogonal complement. The homological index of this action is estimated from above and from below. In case $n$ is a power of two it is shown that $\hind G_{2n}^n=2n-1$.
\end{abstract}

\maketitle

\section{Introduction}

The topology of real Grassmannians has many applications in the discrete and convex geometry. For example, the Schubert calculus and other topological facts (e.g. from~\cite{cf1960,hil1980A}) can be applied to obtain some existence theorems for flat transversals (affine flats intersecting all members of a given family of sets), see~\cite{dol1993,ziv2004,kar2008bu,mk2010} for example.

In this paper we consider the Grassmannian $G_{2n}^n$ of $n$-dimensional subspaces of $\mathbb R^{2n}$. This space has a natural $Z_2$-action (involution) by taking the orthogonal complement of the subspace. The well-known invariant of $Z_2$-spaces is homological index, introduced and studied in~\cite{kr1952,schw1957,cf1960}, see also the book~\cite{mat2003} for a simplified introduction to the index and its many applications to combinatorics and geometry.

The following theorem gives an estimate for the index of the Grassmannian.

\begin{thm}
\label{index-est}
If $n=2^l(2m+1)$, then 
$$
2^{l+1}-1\le \hind G_{2n}^n \le 2n-1,
$$
for $n=2m+1$ the index equals $1$, for $n=2(2m+1)$ the index equals $3$.
\end{thm}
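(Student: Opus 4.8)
The plan is to derive the upper bound $\hind G_{2n}^n\le2n-1$ from a single explicit equivariant map, the exact value $1$ for odd $n$ from the orientation double cover, and the remaining bounds from the Gysin/transfer sequence of the covering $\pi\colon G_{2n}^n\to N:=G_{2n}^n/Z_2$ (with $w\in H^1(N;Z_2)$ its class, so $\hind G_{2n}^n=\max\{k:w^k\ne0\}$). For the upper bound, attach to $L\in G_{2n}^n$ the orthogonal reflection $A(L)=P_L-P_{L^\perp}$, which is symmetric with $A(L)^2=I$ and $A(L^\perp)=-A(L)$; for a fixed unit vector $e$ the map $f_e\colon L\mapsto A(L)e$ lands in $S^{2n-1}$ (since $|A(L)e|^2=\langle A(L)^2e,e\rangle=1$) and is antipodal, so $\hind G_{2n}^n\le\hind S^{2n-1}=2n-1$.

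For the exact value when $n$ is odd, lift $\tau$ to $\tilde G_{2n}^n$ (the oriented Grassmannian, a double cover of $G_{2n}^n$) by the Hodge star, $(L,\omega)\mapsto(L^\perp,*\omega)$; since $**=(-1)^n$ on $\Lambda^n\mathbb R^{2n}$, this lift $\tilde\tau$ satisfies $\tilde\tau^2=(-1)^n\mathrm{id}$, which for odd $n$ equals the deck transformation $\sigma$. Hence $\tilde G_{2n}^n\to N$ is a $Z_4$-covering reducing mod $2$ to $\pi$, so $w$ lifts to $H^1(N;Z_4)$ and $w^2=Sq^1w=\beta(w)=0$; together with $\hind\ge1$ (automatic for a free action on a connected space) this gives $\hind G_{2n}^n=1$.

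For even $n\ge2$ one has $\pi_1G_{2n}^n=Z_2$, so $\tilde G_{2n}^n$ is the only connected double cover and the involution $\tilde\tau$ has no order-$4$ lift; hence $w$ does not lift to $Z_4$ and $w^2\ne0$. To upgrade this to $w^3\ne0$, use exactness of $H^2(G_{2n}^n)\xrightarrow{\pi_!}H^2(N)\xrightarrow{\cup w}H^3(N)$: if $w^3=0$ then $w^2=\pi_!(\xi)$, and applying $\pi^*$ (using $\pi^*\pi_!(\xi)=\xi+\tau^*\xi$ and $\pi^*w=0$) forces $\tau^*\xi=\xi$; but the $\tau$-invariants of $H^2(G_{2n}^n;Z_2)=\langle w_1(\gamma)^2,w_2(\gamma)\rangle$ are the multiples of $w_1(\gamma)^2$, and for even $n$ the Hodge-star cocycle descends $\det\gamma$ to a line bundle $\mu$ on $N$, so $w_1(\gamma)=\pi^*w_1(\mu)$ and $\pi_!(w_1(\gamma)^2)=w_1(\mu)^2\cup\pi_!(1)=0$ — a contradiction. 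This establishes the theorem for $n=2m+1$ and gives $3\le\hind\le2n-1$ for $n=2(2m+1)$.

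What remains — and where I expect the real difficulty — is $\hind G_{2n}^n\ge2^{l+1}-1$ in general and $\hind G_{2n}^n\le3$ for $n=2(2m+1)$ with $m\ge1$. For the lower bound, the $Z_2$-equivariant embedding $G_{2^{l+1}}^{2^l}\hookrightarrow G_{2n}^n$, $M\mapsto M\otimes\mathbb R^{2m+1}$ (from $\mathbb R^{2n}=\mathbb R^{2^{l+1}}\otimes\mathbb R^{2m+1}$) reduces everything to $w^{2^{l+1}-1}\ne0$ in $H^*(G_{2^{l+1}}^{2^l}/Z_2;Z_2)$; for $l=1$ this is the classical identification $G_4^2/Z_2\cong\mathbb{RP}^2\times\mathbb{RP}^2$ with $w=a+b$, whence $(a+b)^3\ne0$, and for $l\ge2$ one should iterate the transfer-sequence computation, tracking $\pi_!$, $\pi^*$ on the classes $w_i(\gamma)$ inside $H^*(G_{2n}^n;Z_2)=Z_2[w_1,\dots,w_n]/(\bar w_{n+1},\dots,\bar w_{2n})$ as a $Z_2[\tau]$-module. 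For the refined upper bound it suffices to exhibit $\xi\in H^3(G_{2n}^n)^{\tau}$ with $\pi_!(\xi)=w^3$ (plausibly a combination of $w_1w_2,w_3$ whose transfer depends on $n\bmod4$), since then $w^4=w^3\cup w=\pi_!(\xi\cup\pi^*w)=0$ by the projection formula.
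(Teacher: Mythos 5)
Your proposal establishes three of the theorem's four assertions correctly, and two of them by routes genuinely different from the paper's. The upper bound via $L\mapsto A(L)e$ is the paper's own map in light disguise: $A(L)=2P-E$, so $f_{e_1}$ is twice the map built in Lemma~\ref{index-est-up} from the first row of $P-\tfrac12E$ (your observation that $A(L)$ is orthogonal even spares the eigenvalue argument). For odd $n$ the paper computes $c^2=0$ inside $H^*(BG)/\ker\pi^*$ for $G=O(n)\wr Z_2$, whereas your Hodge-star lift of order four, forcing $w$ to lift to $Z_4$-coefficients and hence $w^2=\beta(w)=0$, is a clean covering-space shortcut. Your transfer-sequence proof that $w^3\neq0$ for every even $n$ is also correct (no order-$4$ lift gives $w^2=\beta(w)\neq0$; then $w^3=0$ would give $w^2=\pi_!(\xi)$ with $\tau^*\xi=\xi$, so $\xi\in\{0,\,w_1(\gamma)^2\}$, and both have vanishing transfer since $\det\gamma$ descends for even $n$ and $\pi_!(1)=0$), and it replaces the paper's relation-chasing; combined with the reduction of Lemma~\ref{index-div} it recovers the lower bound for $l=1$.

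Nevertheless the proposal does not prove the theorem, and you say so yourself: the two hardest claims are left as programs. First, the general lower bound $\hind G_{2n}^n\ge 2^{l+1}-1$ reduces (by your tensor-product embedding, or the paper's diagonal one) to $\hind G_{2^{l+1}}^{2^l}\ge 2^{l+1}-1$, and for $l\ge2$ you offer only the instruction to ``iterate the transfer-sequence computation.'' That iteration is precisely the core of the paper: it is carried out not on $H^*(G_{2n}^n)$ but on the group cohomology $H^*(B(O(n)\wr Z_2))$, which requires the external Steenrod squares and the operations $\odot$ and $\iota$ (Lemmas~\ref{ext-sq} and~\ref{sw-sq}), the identification of $\ker\pi^*$ as the ideal generated by the components of $\sum_{i<j}w_i\odot w_j+\sum_i(1+c)^{n-i}\Sqe w_i$ (Lemma~\ref{ideal-ind}), and then a cascade of substitutions producing $c^{2^{l+1}-1}=w_{2^l}\odot w_{2^l-1}\neq0$. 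Nothing in your outline guarantees that tracking $\pi_!$ and $\pi^*$ on $Z_2[w_1,\dots,w_n]/(\bar w_{n+1},\dots,\bar w_{2n})$ as a $Z_2[\tau]$-module closes up after finitely many steps, and the sensitivity of the answer to the $2$-adic valuation of $n$ is exactly where the difficulty lives. Second, the upper bound $\hind\le3$ for $n\equiv2\pmod 4$ is reduced to exhibiting $\xi\in H^3(G_{2n}^n)$ with $\pi_!\xi=w^3$; but by exactness of the transfer sequence the existence of such a $\xi$ is \emph{equivalent} to $w^4=0$, so this is a restatement rather than a proof, and no candidate class is verified. The proposal is therefore a correct partial proof with the paper's central computation missing.
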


The lower and the upper bounds coincide for $n=2^l$, odd $n$, $n=2(2m+1)$. In other cases there is still some gap between them. This result easily produces some geometric consequences. Here is one example (it also uses Lemma~\ref{index-bu} below).

\begin{cor}
Let $n=2^l(2m+1)$, $k=2^{l+1}-1$. Consider some $k$ continuous (in the Hausdorff metric) $O(n)$-invariant functions $\alpha_1,\ldots,\alpha_k$ on (convex) compacts in $\mathbb R^n$. Then for any (convex) compact $K\subseteq \mathbb R^{2n}$ there exist a pair of orthogonal $n$-dimensional subspaces $L$ and $M$, such that for their respective orthogonal projections $\pi_L$ and $\pi_M$ we have
$$
\forall i=1,\ldots,k\ \alpha_i(\pi_L(K)) = \alpha_i(\pi_M(K)).
$$
\end{cor}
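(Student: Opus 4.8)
The plan is to reduce the statement to a Borsuk--Ulam-type conclusion via the homological index, exactly the mechanism packaged in Lemma~\ref{index-bu}.

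First I would turn the data into a $Z_2$-equivariant test map on $G_{2n}^n$. For $L\in G_{2n}^n$ pick any linear isometry $\iota_L\colon L\to\mathbb R^n$; then $\iota_L(\pi_L(K))\subseteq\mathbb R^n$ is a compact convex set, well defined up to the $O(n)$-action, so the $O(n)$-invariance of each $\alpha_i$ makes the number $\alpha_i(\pi_L(K)):=\alpha_i(\iota_L(\pi_L(K)))$ independent of the choice of $\iota_L$. Since $L\mapsto\pi_L$ is continuous and $K$ compact, $L\mapsto\pi_L(K)$ is continuous in the Hausdorff metric, and hence $L\mapsto\alpha_i(\pi_L(K))$ is a continuous function on $G_{2n}^n$. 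Now set
$$
f\colon G_{2n}^n\to\mathbb R^k,\qquad f(L)=\bigl(\alpha_1(\pi_L(K))-\alpha_1(\pi_{L^\perp}(K)),\ \ldots,\ \alpha_k(\pi_L(K))-\alpha_k(\pi_{L^\perp}(K))\bigr).
$$
The involution $\tau(L)=L^\perp$ on $G_{2n}^n$ is free (since $L\cap L^\perp=0$ forces $L\ne L^\perp$), and $f(\tau L)=f(L^\perp)=-f(L)$, so $f$ is $Z_2$-equivariant with $Z_2$ acting antipodally on $\mathbb R^k$.

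Next I would invoke the index. By the lower bound of Theorem~\ref{index-est}, $\hind G_{2n}^n\ge 2^{l+1}-1=k$. If $f$ were nowhere zero, then $L\mapsto f(L)/\|f(L)\|$ would be a $Z_2$-map $G_{2n}^n\to S^{k-1}$, forcing $\hind G_{2n}^n\le\hind S^{k-1}=k-1$, a contradiction; this is precisely the content of Lemma~\ref{index-bu} applied to $f$. Hence $f(L_0)=0$ for some $L_0$, and then $L:=L_0$ and $M:=L_0^\perp$ are orthogonal $n$-dimensional subspaces with $\alpha_i(\pi_L(K))=\alpha_i(\pi_M(K))$ for all $i=1,\ldots,k$, as claimed. (The same argument with ``convex'' removed throughout gives the non-convex version.)

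The only genuinely delicate point is the first step: there is no global continuous choice of the isometries $\iota_L$ over $G_{2n}^n$, and the $O(n)$-invariance hypothesis is exactly what is needed to make $\alpha_i(\pi_L(K))$ independent of the local trivialization of the frame bundle, so that local continuity glues to a globally defined continuous function. Everything after that is a routine application of the monotonicity of the homological index together with $\hind S^{k-1}=k-1$, both already subsumed in Lemma~\ref{index-bu}.
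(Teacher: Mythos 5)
Your proposal is correct and matches the paper's intended argument: the paper gives no written proof, only the remark that the corollary follows from the lower bound $\hind G_{2n}^n\ge 2^{l+1}-1$ in Theorem~\ref{index-est} together with Lemma~\ref{index-bu}, which is exactly the test-map/index contradiction you carry out. Your extra care about using $O(n)$-invariance to make $\alpha_i(\pi_L(K))$ well defined and continuous is a worthwhile detail the paper leaves implicit.
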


In this corollary $\alpha_i$ can be the Steiner measures (volume, the boundary measure, the mean width, etc.) for example. The same statement holds if we consider a point $x\in K$ and sections of $K$ by mutually orthogonal affine $n$-subspaces $L$ and $M$ through $x$, instead of projections to $L$ and $M$.

The author thanks O.R.~Musin for drawing attention to the problem of calculating the $Z_2$-index of $G_{2n}^n$ and for the discussion.

\section{Preliminary observations}

Let us state some topological definitions on spaces with group action, see~\cite{hsiang1975} for more detailed discussion.

\begin{defn}
Let $G$ be a compact Lie group or a finite group. A space $X$ with continuous action of $G$ is called a \emph{$G$-space}. A continuous map of $G$-spaces, commuting with the action of $G$ is called a \emph{$G$-map} or an \emph{equivariant map}. A $G$-space is called \emph{free} if the action of $G$ is free.
\end{defn}

There exists the universal free $G$-space $EG$ such that any other $G$-space maps uniquely (up to $G$-homotopy) to $EG$. The space $EG$ is homotopy trivial, the quotient space is denoted $BG = EG/G$. For any $G$-space $X$ and an Abelian group $A$ the equivariant cohomology $H_G^*(X, A)=H^*(X\times_G EG, A)$ is defined, and for free $G$-spaces the equality $H_G^*(X, A) = H^*(X/G, A)$ holds.

Consider the case $G=Z_2$. Note that 
$$
H_G^*(\mathrm{pt}, Z_2) = H^*(\mathbb RP^\infty, Z_2) = Z_2[c]=\Lambda,
$$ 
where the dimension of the generator is $\dim c = 1$. Since any $G$-space $X$ can be mapped to the point $\pi_X: X\to \mathrm{pt}$, we have a natural map $\pi_X^* : \Lambda\to H_G^*(X, Z_2)$, the image $c$ under this map will be denoted by $c$, if it does not make a confusion. The generator element of $Z_2$ will be denoted by $\sigma$.

\begin{defn}
The \emph{cohomology index} of a $Z_2$-space $X$ is the maximal $n$ such that the power $c^n\not=0$ in $H_G^*(X,Z_2)$. If there is no maximum, we consider the index equal to $\infty$. Denote the index of $X$ by $\hind X$.
\end{defn}

Let us state the following well-known lemma.

\begin{lem}[The generalized Borsuk-Ulam theorem for odd maps]
\label{index-bu}
If there exists an equivariant map $f: X\to Y$, then $\hind X\le \hind Y$.
\end{lem}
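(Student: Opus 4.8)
The plan is to deduce the statement from functoriality of the Borel construction together with the naturality of the distinguished class $c$. Given an equivariant map $f\colon X\to Y$, the product $f\times\mathrm{id}_{EG}\colon X\times EG\to Y\times EG$ is again equivariant for the diagonal action, so it descends to a map on Borel constructions
$$
\bar f\colon X\times_G EG\longrightarrow Y\times_G EG ,
$$
and passing to cohomology with $Z_2$-coefficients yields a ring homomorphism $f^*\colon H_G^*(Y,Z_2)\to H_G^*(X,Z_2)$.

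Next I would check that $f^*$ carries the canonical class to the canonical class. The composite $\pi_Y\circ f\colon X\to\mathrm{pt}$ equals $\pi_X$, hence $f^*\circ\pi_Y^*=\pi_X^*$ as maps $\Lambda\to H_G^*(X,Z_2)$. Evaluating on the generator of $\Lambda$ and using the paper's convention that its image in any $H_G^*(\,\cdot\,,Z_2)$ is again written $c$, we obtain $f^*(c)=c$; since $f^*$ is multiplicative this gives $f^*(c^k)=c^k$ for every $k\ge 0$.

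Finally I would read off the inequality. If $k\le\hind X$, then $c^k\neq 0$ in $H_G^*(X,Z_2)$; but $c^k=f^*(c^k)$ is the image under $f^*$ of the class $c^k\in H_G^*(Y,Z_2)$, so that class is nonzero, i.e. $c^k\neq 0$ in $H_G^*(Y,Z_2)$ and hence $k\le\hind Y$. Taking the supremum over admissible $k$ yields $\hind X\le\hind Y$, with the evident reading when $\hind X=\infty$. There is no genuinely hard step here; the only point requiring care is the direction of $f^*$ together with the naturality of $c$ — that the degree-one class $c$ on $X$ is literally pulled back from the one on $Y$ rather than being an a priori unrelated class — and this is exactly what the identity $\pi_Y\circ f=\pi_X$ secures.
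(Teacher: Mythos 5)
Your argument is correct and is precisely the standard proof of this lemma (functoriality of the Borel construction plus naturality of $c$ via $\pi_Y\circ f=\pi_X$); the paper itself states the lemma as well known and omits the proof entirely. Nothing is missing.
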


Now we are ready to prove the upper bound in Theorem~\ref{index-est}.

\begin{lem}
\label{index-est-up}
$$
\hind G_{2n}^n \le 2n-1.
$$
\end{lem}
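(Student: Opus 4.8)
The plan is to apply Lemma~\ref{index-bu} with the unit sphere $S^{2n-1}\subset\mathbb R^{2n}$, equipped with the antipodal $Z_2$-action, as the target. One first records the standard computation $\hind S^{2n-1}=2n-1$: the antipodal action is free, so $H_G^*(S^{2n-1},Z_2)=H^*(\mathbb{RP}^{2n-1},Z_2)=\Lambda/(c^{2n})$, the class $c$ being the pullback of the generator of $H^1(BZ_2,Z_2)$ along the induced map of Borel constructions $\mathbb{RP}^{2n-1}\to\mathbb{RP}^\infty$ (the standard inclusion); hence $c^{2n-1}\neq0$ while $c^{2n}=0$.

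The main step is to produce an equivariant map $f\colon G_{2n}^n\to S^{2n-1}$. Fix once and for all a nonzero vector $e\in\mathbb R^{2n}$, and for $L\in G_{2n}^n$ let $P_L$ be the orthogonal projection of $\mathbb R^{2n}$ onto $L$, so that $I-P_L=P_{L^\perp}$. Put
$$
f(L)=\frac{(2P_L-I)e}{\|e\|}=\frac{e_L-e_{L^\perp}}{\|e\|},
$$
where $e=e_L+e_{L^\perp}$ is the orthogonal decomposition with $e_L\in L$ and $e_{L^\perp}\in L^\perp$. This map is continuous since $P_L$ depends continuously on $L$; it takes values in $S^{2n-1}$ because $\|e_L-e_{L^\perp}\|^2=\|e_L\|^2+\|e_{L^\perp}\|^2=\|e\|^2$ by the Pythagorean theorem, so in particular the denominator never vanishes and $f$ is defined everywhere; and it is equivariant because $2P_{L^\perp}-I=-(2P_L-I)$ forces $f(L^\perp)=-f(L)$. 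Lemma~\ref{index-bu} now yields $\hind G_{2n}^n\le\hind S^{2n-1}=2n-1$, which is the assertion.

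I do not foresee a genuine obstacle here: the whole content of the argument is the observation that ``projection onto $L$ minus projection onto $L^\perp$'', applied to one fixed vector, is a nowhere-zero continuous map $G_{2n}^n\to\mathbb R^{2n}$ that is odd under the involution $L\mapsto L^\perp$. The one point deserving care is the nonvanishing of this vector, which is precisely the Pythagorean identity above, or equivalently the fact that $2P_L-I$ is a self-adjoint involution and hence norm-preserving; since $e\neq0$ this rules out $e_L=e_{L^\perp}$. For perspective, the same map factors as the homeomorphism $L\mapsto 2P_L-I$ of $G_{2n}^n$ onto the space of self-adjoint involutions of $\mathbb R^{2n}$ of trace zero, followed by evaluation at $e$ (and renormalization) to land in $S^{2n-1}$.
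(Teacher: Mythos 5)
Your proposal is correct and is essentially the paper's own argument: the paper's map $f(P)=(P_{11}-1/2,\,P_{12},\dots,P_{1,2n})$ is exactly $\tfrac12(2P-I)e_1$, i.e.\ your map with $e=e_1$ up to a positive scalar, followed by projection to the sphere and an appeal to Lemma~\ref{index-bu}. Your added observation that $2P_L-I$ is a self-adjoint involution, so the image already lies on the sphere after dividing by $\|e\|$, is a pleasant refinement of the paper's nonvanishing argument (no eigenvalue $1/2$) but does not change the route.
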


\begin{proof}
Let us parameterize $G_{2n}^n$ by the orthogonal projection matrices $P$. These matrices are characterized by the equations
$$
P^t=P,\ P^2=P,\ \tr P = n.
$$
The action of $Z_2$ is given by ($E$ is the unit matrix)
$$
\sigma(P) = E-P.
$$
Now consider the map $f: G_{2n}^n\to \mathbb R^{2n}$, defined by the coordinates
$$
f_1(P) = P_{11}-1/2,\ f_i(P)=P_{1i}, (i=2,\ldots, 2n).
$$
This map is $Z_2$-equivariant, if the action on $\mathbb R^{2n}$ is antipodal, i.e. $\sigma: x\mapsto -x$. Note also that $f(P)$ is never zero, otherwise $P$ would have an eigenvalue $1/2$, which is not true. Hence $f$ composed with the projection $\mathbb R^{2n}\setminus\{0\}\to S^{2n-1}$ gives the equivariant map 
$$
\tilde f : G_{2n}^n\to S^{2n-1},
$$
and the result follows by Lemma~\ref{index-bu}.
\end{proof}

\begin{lem}
\label{index-div}
Suppose $n=ds$ for some positive integers $d,s$. Then
$$
\hind G_{2n}^n \ge \hind G_{2d}^d.
$$ 
\end{lem}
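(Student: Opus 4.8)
The plan is to exhibit an explicit $Z_2$-equivariant map $G_{2d}^d\to G_{2n}^n$ and then invoke Lemma~\ref{index-bu}. The natural construction uses the tensor product: fix orthonormal bases of $\mathbb R^{2d}$ and $\mathbb R^s$, take the tensor products of basis vectors as an orthonormal basis, and thereby identify $\mathbb R^{2n}=\mathbb R^{2ds}$ with $\mathbb R^{2d}\otimes\mathbb R^s$ as Euclidean spaces. I would then define $g\colon G_{2d}^d\to G_{2n}^n$ by $g(L)=L\otimes\mathbb R^s$.

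First I would check that $g$ is well defined, i.e.\ lands in $G_{2n}^n$: since $\dim(L\otimes\mathbb R^s)=(\dim L)\cdot s=ds=n$, the subspace $g(L)$ is indeed an $n$-plane in $\mathbb R^{2n}$. Continuity is then clear — in terms of orthogonal projectors the map is $P\mapsto P\otimes I_s$ (which is again symmetric, idempotent, and of trace $ds=n$), a polynomial expression in the entries of $P$ — so $g$ is continuous, in fact smooth.

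The crux is equivariance. The involution on $G_{2d}^d$ sends $L$ to $L^\perp$ and the involution on $G_{2n}^n$ sends $N$ to $N^\perp$, so I must verify that $(L\otimes\mathbb R^s)^\perp=L^\perp\otimes\mathbb R^s$ inside $\mathbb R^{2d}\otimes\mathbb R^s$. This follows from the identity $\langle u\otimes v,\,u'\otimes v'\rangle=\langle u,u'\rangle\langle v,v'\rangle$ for the tensor-product inner product: the subspace $L^\perp\otimes\mathbb R^s$ is orthogonal to $L\otimes\mathbb R^s$, and a dimension count, $ds+ds=2ds=2n$, shows that it is the full orthogonal complement. Hence $g(\sigma L)=L^\perp\otimes\mathbb R^s=(L\otimes\mathbb R^s)^\perp=\sigma(g(L))$, so $g$ is a $Z_2$-map.

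With the equivariant map $g$ in hand, Lemma~\ref{index-bu} yields $\hind G_{2d}^d\le\hind G_{2n}^n$, which is the assertion. There is essentially no hard step here; the only point requiring a little care is the orthogonality bookkeeping for the tensor product, which is why I would state the identification $\mathbb R^{2n}\cong\mathbb R^{2d}\otimes\mathbb R^s$ via orthonormal bases explicitly, so that the two complementation involutions match on the nose. (Equivalently one could use the isometry $\mathbb R^{2n}\cong(\mathbb R^{2d})^{\oplus s}$ and the map $L\mapsto L^{\oplus s}$, with the same verification.)
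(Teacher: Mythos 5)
Your proof is correct and is essentially the paper's argument: the paper identifies $\mathbb R^{2n}$ with $(\mathbb R^{2d})^{\oplus s}$ and uses the equivariant map $L\mapsto L\oplus\dots\oplus L$, which is exactly your $L\mapsto L\otimes\mathbb R^s$ under the standard isometry $\mathbb R^{2d}\otimes\mathbb R^s\cong(\mathbb R^{2d})^{\oplus s}$, followed by the same appeal to Lemma~\ref{index-bu}. Your verification of equivariance via the tensor-product inner product and the dimension count is a correct (and slightly more detailed) version of what the paper leaves as ``evidently equivariant.''
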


\begin{proof}
Let us decompose 
$$
\mathbb R^{2n} = \mathbb R^{2d}\oplus\dots\oplus \mathbb R^{2d}
$$
into $s$ summands. Consider a $d$-subspace $L\in G_{2d}^d$, and define with the above decomposition
$$
f(L) = L\oplus\dots\oplus L\subset \mathbb R^{2n}.
$$
The map $f:G_{2d}^d\to G_{2n}^n$ is evidently equivariant and by Lemma~\ref{index-bu} we obtain the inequality.
\end{proof}

In order to prove Theorem~\ref{index-est} it remains to prove the following lemmas.

\begin{lem}
\label{index-odd}
If $n$ is odd, then $\hind G_{2n}^n=1$, if $n=2\mod 4$, then $\hind G_{2n}^n=3$.
\end{lem}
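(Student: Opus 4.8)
The plan is to establish the two equalities by pinning down $\hind G_{2n}^n$ both from above and from below in the two residue classes. Lemma~\ref{index-est-up} already gives $\hind G_{2n}^n\le 2n-1$, but that is far too weak here; I would instead produce a sharper equivariant map out of $G_{2n}^n$ when $n\not\equiv 0\pmod 4$. The key structural observation is that for $L\in G_{2n}^n$ the composite $L\hookrightarrow\mathbb R^{2n}\to \mathbb R^{2n}/L^\perp\cong L$ is the identity, so there is no a priori obstruction; instead one should look at the line bundle $\det L=\Lambda^n L$ over $G_{2n}^n$ and how $\sigma$ acts on it. Since $\mathbb R^{2n}=L\oplus L^\perp$, we get $\Lambda^{2n}\mathbb R^{2n}\cong \det L\otimes\det L^\perp$, a trivial bundle; hence $\sigma^*(\det L)=\det L^\perp\cong(\det L)^{-1}\cong\det L$ (real line bundles being their own inverses). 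More useful is the \emph{orientation} data: choosing an orientation of $\mathbb R^{2n}$, the involution $\sigma$ sends an oriented $L$ to $L^\perp$ with the complementary orientation, and the sign by which the ordered decomposition is permuted is $(-1)^{n^2}=(-1)^n$. So for $n$ odd the map $L\mapsto L^\perp$ reverses the relative orientation, which I expect to be exactly the mechanism that forces the index to be small.

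Concretely, for the upper bound I would build an equivariant map $G_{2n}^n\to S^{k}$ with small $k$ as follows. First handle $n$ odd: using the projection-matrix model of Lemma~\ref{index-est-up}, with $\sigma(P)=E-P$, consider the self-adjoint traceless matrix $Q=2P-E$, so $\sigma(Q)=-Q$ and $Q^2=E$. Then $Q$ is a point of the space of involutions of $\mathbb R^{2n}$ with trace $0$, on which $Z_2$ acts antipodally by $Q\mapsto -Q$. One needs a $Z_2$-map from this space (with the antipodal action) to a small sphere. I claim that when $n$ is odd there is an equivariant map to $S^1$: pick two generic vectors $u,v\in\mathbb R^{2n}$ and send $Q\mapsto(\langle Qu,u\rangle,\langle Qv,v\rangle)$ — this is genuinely odd, but one must check nonvanishing, which may fail; the robust fix is to use the determinant-type invariant $\det(Q+tI)$ for small $t$, or better, to exploit that $Q$ restricted to the $(-1)$-eigenspace relative to a fixed reference involution $Q_0$ detects an orientation class. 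The cleanest route: fix $Q_0$ with eigenspaces $V_+\oplus V_-$ ($\dim V_\pm=n$); for $Q$ near the "antidiagonal" locus the block $Q|_{V_+\to V_-}$ is an isomorphism, and $\det$ of it (in fixed bases) is a nonzero real number whose sign flips under $Q\mapsto -Q$ precisely because $n$ is odd, giving an equivariant map to $S^0$ off a bad locus of codimension $\ge 1$; patching this with a norm function gives the map to $S^1$. Thus $\hind\le 1$ for $n$ odd; combined with $\hind\ge 1$ (the space is connected and the action is free, so $c\ne 0$), equality follows.

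For $n\equiv 2\pmod 4$, the same orientation sign is now $+1$, so the degree-one obstruction vanishes and one must go one level up in the Postnikov tower: I would instead map to $S^3$ using a quaternionic refinement. Write $\mathbb R^{2n}\cong\mathbb R^2\otimes\mathbb R^n$ and use the complex (in fact, when $n$ is even, symplectic/quaternionic) structure on the relative normal bundle; the second obstruction to compressing $G_{2n}^n\to S^\infty$ into $S^3$ is a class in $H^4_{Z_2}(G_{2n}^n;Z_2)$ which one shows is hit, equivalently $c^4$ restricted appropriately is detected, while $c^4=0$ once we have the $S^3$-map — so $\hind\le 3$; and $\hind\ge 3$ comes from Lemma~\ref{index-div} applied with $d=2$, $s=2m+1$, since $\hind G_4^2=3$ (the case $n=2$, where $G_4^2\cong S^2\times S^2/Z_2$ is a standard computation giving index $3$, or one invokes the lower bound $2^{l+1}-1=3$ of Theorem~\ref{index-est} in the base case $n=2$). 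Actually the lower bounds in both cases are subsumed by Lemma~\ref{index-div} together with the $n=1$ and $n=2$ computations, so the real content is the two upper bounds, and there the main obstacle — the step I expect to be genuinely delicate — is verifying nonvanishing of the candidate equivariant maps (the $S^0$-valued determinant in the odd case, and the quaternionic map in the $n\equiv 2$ case), i.e.\ showing that the "bad locus" where the construction degenerates has positive codimension and can be avoided by a generic choice, which is where the parity of $n$ must enter in an essential way rather than incidentally.
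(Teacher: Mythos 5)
Your lower bounds are fine ($c\neq 0$ for a connected free action; $\hind G_{2n}^n\ge \hind G_4^2=3$ via Lemma~\ref{index-div} and the $n=2$ case of Lemma~\ref{index-2l}), and they agree with what the paper uses. But the entire content of the lemma is the two upper bounds, and there your argument has genuine gaps in both cases. For $n$ odd, you never actually produce the equivariant map $G_{2n}^n\to S^1$. The function $\det\bigl(Q|_{V_+\to V_-}\bigr)$ does transform by $(-1)^n$ under $Q\mapsto -Q$, but it vanishes on a codimension-one Schubert-type locus, and ``patching with a norm function'' cannot repair this: an equivariant map to $S^1\subset\mathbb R^2$ with the antipodal action is a pair of \emph{odd} functions with no common zero, whereas a norm is even. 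No second odd function is exhibited, and no argument is given that the common zero set of any proposed pair is empty. For $n\equiv 2\pmod 4$ the argument is circular: you conclude $c^4=0$ ``once we have the $S^3$-map,'' but the existence of that map is precisely what requires the obstructions (the first of which is $c^4$ itself) to vanish; nothing is proved. More fundamentally, your strategy aims at a strictly stronger statement than needed --- $\hind X\le k$ follows from an equivariant map $X\to S^k$, but is not equivalent to it, and you offer no evidence that such maps to $S^1$ and $S^3$ exist at all.

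For comparison, the paper's route is purely algebraic and avoids constructing any maps: it identifies $G_{2n}^n/Z_2$ with $O(2n)/G$ for $G=O(n)\wr Z_2$, computes $H^*(BG)$ via external Steenrod squares (Lemma~\ref{ext-sq}), and shows (Lemma~\ref{ideal-ind}) that the kernel of $H^*(BG)\to H^*(O(2n)/G)$ is generated by the homogeneous components of $\sum_{i<j}w_i\odot w_j+\sum_i(1+c)^{n-i}\Sqe w_i$. For odd $n$ the degree-one relation reads $c=1\odot w_1$, so $c^2=0$ by Lemma~\ref{odot-ann}; for $n\equiv 2\pmod 4$ the degree-two and degree-three relations combine to give $c^3=1\odot w_3+w_1\odot w_2$, so $c^4=0$ by the same lemma. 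If you want to salvage a geometric argument for the odd case, the orientation sign $(-1)^{n^2}$ you observed can be made to work via the Hodge star: $L\mapsto[e_L]\in P(\Lambda^n\mathbb R^{2n})$ intertwines $\perp$ with the free involution induced by $*$ (a complex structure when $n$ is odd), whose target has index $1$; but that is a different construction from the one you sketched, and nothing analogous is offered for $n\equiv 2\pmod 4$.
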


\begin{lem}
\label{index-2l}
If $n=2^l$, then $\hind G_{2n}^n = 2n-1$.
\end{lem}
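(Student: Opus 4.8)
Since Lemma~\ref{index-est-up} gives $\hind G_{2n}^n\le 2n-1$, the content is the opposite inequality for $n=2^l$. The involution is free (no subspace equals its orthogonal complement), so $H^*_{Z_2}(G_{2n}^n;\mathbb F_2)=H^*(G_{2n}^n/Z_2;\mathbb F_2)$, the class $c$ is $w_1(\xi)$ for $\xi$ the line bundle of the double cover $p\colon G_{2n}^n\to G_{2n}^n/Z_2$, and the goal becomes $c^{2n-1}\neq 0$. I would not try to produce a $Z_2$-map $S^{2n-1}\to G_{2n}^n$: a map of the obvious shape $x\mapsto\sum_i x_iB_i$ would require $2n$ pairwise anticommuting symmetric involutions of $\mathbb R^{2n}$, which the Radon--Hurwitz theorem forbids once $2n>8$, so for $l\ge 3$ the lower bound has to be obtained cohomologically.

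The main line is to compute enough of $H^*_{Z_2}(G_{2n}^n;\mathbb F_2)$ from the Serre spectral sequence of the Borel fibration $G_{2n}^n\hookrightarrow G_{2n}^n\times_{Z_2}EZ_2\to BZ_2$. Here $H^*(G_{2n}^n;\mathbb F_2)=\mathbb F_2[w_1,\dots,w_n]/(\bar w_{n+1},\dots,\bar w_{2n})$ carries the action of $\sigma$ interchanging $w_i=w_i(\gamma)$ and $\bar w_i=w_i(\gamma^\perp)$, which are tied by $w\bar w=1$ (so $\bar w_1=w_1$, $\bar w_2=w_2+w_1^2$, and so on). Decomposing each $H^q(G_{2n}^n;\mathbb F_2)$ as a sum of free and trivial $\mathbb F_2[Z_2]$-modules yields the $E_2$-term ($p\ge1$: the trivial summands; $p=0$: the $\sigma$-invariants); the generator of $E_2^{p,0}$ is $c^p$, and $\hind G_{2n}^n$ equals the largest $p$ with $c^p$ surviving to $E_\infty$. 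By the Gysin sequence of $p$ this is the same as saying that $c^{2n-1}$ lies in the image of the transfer $p_!$ (which is exactly the upper bound) while $c^{2n-2}$ does not.

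The point at which $n=2^l$ should enter is the arithmetic of $2n-1=2^{l+1}-1$: its binary expansion is all ones, so every $\binom{2n-1}{k}$ is odd, giving $\mathrm{Sq}(c^{2n-1})=c^{2n-1}(1+c)^{2n-1}=\sum_{k}c^{2n-1+k}$, and the Cartan formula then pins down the spectral-sequence differentials on the columns $E_r^{*,q}$ with $q\le 2n-1$ in terms of precisely these coefficients. Feeding in the explicit low-degree module structure (for instance $H^2(G_{2n}^n;\mathbb F_2)$ is a free $\mathbb F_2[Z_2]$-module, so $E_2^{p,2}=0$ for $p\ge1$, and similarly in higher degrees) should show that no differential hits $c^{2n-1}$, whereas $c^{2n}$ is killed just as in Lemma~\ref{index-est-up} --- consistently with the fact that $\binom{2n}{n}\equiv 2\pmod 4$ at powers of two.

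The hard part is exactly this bookkeeping: one needs good control of the free rank of $H^*(G_{2n}^n;\mathbb F_2)$ as a $\mathbb F_2[Z_2]$-module in each degree up to total degree $2n$, and of the corresponding differentials (equivalently, of the transfer $p_!$) there. A more geometric alternative that may be cleaner: the map $\tilde f$ of Lemma~\ref{index-est-up} is a submersion at a generic point, with $\tilde f^{-1}(v)=\{L:\ \tfrac{e_1+v}{2}\in L,\ \tfrac{e_1-v}{2}\in L^\perp\}\cong G_{2n-2}^{n-1}$, embedded with the expected codimension $2n-1$, so $c^{2n-1}$ is the Poincar\'e dual in $G_{2n}^n/Z_2$ of this copy of $G_{2n-2}^{n-1}$; then $c^{2n-1}\neq0$ amounts to saying that this submanifold is not null-homologous mod $2$, which one would try to verify by intersecting it with a suitable $(2n-1)$-cycle and showing the intersection number is odd exactly when $n$ is a power of two. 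Either way, isolating and using the power-of-two hypothesis in the final survival/parity computation is the crux.
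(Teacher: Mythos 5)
There is a genuine gap: your write-up is a plan rather than a proof, and the step you defer (``the hard part is exactly this bookkeeping'', ``the crux'') is the entire content of the lemma. In the Borel spectral sequence over $BZ_2$ that you set up, the class $c^{2n-1}\in E_2^{2n-1,0}$ can be killed by a differential $d_r$ originating in any column $E_r^{2n-1-r,\,r-1}$, and these columns do not vanish: for example $H^1(G_{2n}^n)=\langle w_1\rangle$ is a \emph{trivial} $Z_2$-module (since $\bar w_1=w_1$), so $E_2^{p,1}\ne 0$ for all $p$, and whether $d_2(w_1)$ equals $c^2$ or $0$ genuinely depends on $n$ (it must be $c^2$ for odd $n$, where the index is $1$, and $0$ for $n=2^l$). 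Your proposed mechanism for controlling these differentials --- computing $\mathrm{Sq}(c^{2n-1})=c^{2n-1}(1+c)^{2n-1}$ and invoking the Cartan formula --- does not determine them: Steenrod operations commute with differentials but give no a priori handle on where the transgressions of the fiber classes land, and the claim that the module structure is free ``similarly in higher degrees'' is false (the invariant subring of $H^*(G_{2n}^n)$ is large). The alternative geometric route (Poincar\'e duality against $\tilde f^{-1}(v)\cong G_{2n-2}^{n-1}$) likewise ends with ``one would try to verify'', i.e.\ with the parity computation still to be done.

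For comparison, the paper circumvents exactly this difficulty by not fibering over $BZ_2$ at all: it writes $G_{2n}^n/Z_2=O(2n)/G$ with $G=O(n)\wr Z_2$ and uses the fibration with fiber $O(2n)$ over $BG$. There the differentials are completely determined, because the primitive generators of $H^*(SO(2n))$ are transgressive and land on the Stiefel--Whitney classes of the $2n$-dimensional representation of $G$; these classes are computed via external Steenrod squares (Lemmas~\ref{ext-sq} and~\ref{sw-sq}), giving the explicit ideal of Lemma~\ref{ideal-ind}. The power-of-two hypothesis then enters through an explicit chain of relations ($c^{2^{l}}\Sqe w_0=\Sqe w_{2^{l-1}}+\dots$, etc., after setting $w_i=0$ for $i\ne 2^l-2^k$), yielding $c^{2n-1}=w_{2^l-1}\odot w_{2^l}\ne 0$. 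If you want to salvage your approach, you would need an equally explicit determination of the differentials in your spectral sequence --- which is essentially equivalent work --- so the missing computation cannot be waved away.
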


\section{External Steenrod squares}
\label{ext-sq-sec}

In order to prove Lemma~\ref{index-2l}, we have to describe the cohomology of the subgroup $G\subset O(2n)$, generated by the subgroup $O(n)\times O(n)$ (from some decomposition $\mathbb R^{2n}=\mathbb R^n\oplus \mathbb R^n$), and $Z_2$ that interchanges the summands $\mathbb R^n$. This group is the wreath product $O(n)\wr Z_2 = (O(n)\times O(n))\rtimes Z_2$.

In order to describe the cohomology of a wreath product, we have to use the construction of external Steenrod squares. We mostly follow~\cite[Ch.~V]{brs1976}, where the Steenrod squares were defined in the unoriented cobordism. The cobordism was defined using mock bundles, if we allow the mock bundles to have codimension $2$ singularities, we obtain ordinary cohomology modulo $2$. In the sequel we consider the cohomology modulo $2$ and omit the coefficients in notation. This construction is known and was used in~\cite{hung1990} to describe the modulo $2$ cohomology of the symmetric group and configuration spaces. Still, for reader's convenience we give a short and self-contained explanation here.

The construction of the external Steenrod squares on a polyhedron $K$ starts with the fiber bundle (for some integer $n>0$)
$$
\sigma_K: (K\times K\times S^n)/Z_2\to S^n/Z_2=\mathbb RP^n.
$$
The group $Z_2$ acts by permuting $K\times K$, and antipodally on $S^n$.
Consider a cohomology class $\xi\in H^*(K)$, represented by a mock bundle $\xi : E(\xi)\to K$. Then the mock bundle 
$$
(\xi\times \xi\times S^n)/Z_2\to (K\times K\times S^n)/Z_2
$$
is the external Steenrod square $\Sqe \xi$. The operation $\Sqe$ is evidently multiplicative, in~\cite[Ch.~V, Proposition~3.3]{brs1976} it is claimed that $\Sqe$ is also additive. We are going to show that it is not true, first we need a definition.

\begin{defn}
The difference $\Sqe(\xi+\eta) - \Sqe\xi - \Sqe\eta$ is represented by the mock bundle
$$
\xi \odot \eta = (\xi\times\eta\times S^n + \eta\times\xi\times S^n)/Z_2,
$$
where $Z_2$ exchanges the components $\xi\times\eta$ and $\eta\times\xi$. 
\end{defn}

Since the fiber of $\sigma_K$ is $K\times K$, the restriction of $\xi\odot \eta$ to the fiber is $\xi\times\eta+\eta\times\xi$, which is nonzero if $\eta\not=\xi$ as cohomology classes. Thus the operation $\odot$ is not trivial.

We need a lemma about the $\odot$-multiplication.

\begin{lem}
\label{odot-ann}
Denote $c$ the hyperplane class in $H^1(\mathbb RP^n)$. Then for any $\xi,\eta\in H^*(K)$ the product 
$$
(\xi\odot\eta)\smile\sigma_K^*(c) = 0
$$
in $H^*((K\times K\times S^n)/Z_2)$.
\end{lem}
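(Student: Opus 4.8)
The plan is to show that $\xi\odot\eta$, being pulled back from a space on which $c$ restricts to zero, must be annihilated by $\sigma_K^*(c)$. Concretely, consider the total space $T=(K\times K\times S^n)/Z_2$ with its projection $\sigma_K\colon T\to \mathbb{RP}^n$, and also the projection $\rho\colon T\to (S^n)/Z_2=\mathbb{RP}^n$ induced by the coordinate on $S^n$; note these are the same map, since $\sigma_K$ is precisely projection onto the $S^n$-factor after quotienting. The key point is to exhibit $\xi\odot\eta$ as the pullback $\varphi^*(\zeta)$ of some class $\zeta$ along a map $\varphi\colon T\to W$ where $W$ is a space for which the relevant degree-one class vanishes — or, more directly, to factor the mock bundle representing $\xi\odot\eta$ through a bundle over $(K\times K\times S^n)/Z_2$ whose structure makes the cup product with $\sigma_K^*(c)$ visibly zero.

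The cleanest route I would take: observe that $\xi\odot\eta$ is represented by $(\xi\times\eta\times S^n+\eta\times\xi\times S^n)/Z_2$, and that the disjoint union $E(\xi\times\eta\times S^n)\sqcup E(\eta\times\xi\times S^n)$ carries a \emph{free} $Z_2$-action (the two summands are swapped, so no point is fixed). Hence the total space of the mock bundle $\xi\odot\eta$ is itself of the form $\tilde E/Z_2$ where $\tilde E\to K\times K\times S^n$ is a $Z_2$-map and the $Z_2$-action on $\tilde E$ is free. Then the composite $\tilde E\to S^n$ (projection to the last factor) is $Z_2$-equivariant for the antipodal action on $S^n$, but it is also null-homotopic $Z_2$-equivariantly? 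No — rather, the point is that $\tilde E$ with its free action maps equivariantly to the total space of $(K\times K)\times S^n\to K\times K$ in a way that the $S^n$ coordinate factors through the two summands. Let me state the real mechanism: the mock bundle $\xi\odot\eta\to T$ admits a map to the bundle $(K\times K\times S^n)\times_{Z_2}\{1,2\}\to T$, i.e.\ to the double cover of $T$ classified by $\sigma_K^*(c)$, since the two summands $\xi\times\eta$ and $\eta\times\xi$ are interchanged exactly by the deck transformation. Pulling $\sigma_K^*(c)$ back to this double cover gives zero (a class dies on its own orientation double cover precisely when it is the class of that cover). Since $\xi\odot\eta$, as a cohomology class, is in the image of the transfer / is supported on this double cover, multiplying by $\sigma_K^*(c)$ kills it.

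So the steps, in order, are: (1) identify the double cover $\pi\colon \widetilde T\to T$ classified by $w:=\sigma_K^*(c)\in H^1(T)$, namely $\widetilde T=(K\times K\times S^n\times\{1,2\})/Z_2$ with $Z_2$ acting diagonally by the swap on all factors, so that $\widetilde T\cong K\times K\times S^n$; (2) observe that the mock bundle $(\xi\times\eta\times S^n+\eta\times\xi\times S^n)/Z_2$ representing $\xi\odot\eta$ is naturally the pushforward $\pi_!$ (mock-bundle transfer) of the mock bundle $\xi\times\eta\times S^n$ over $\widetilde T$, hence $\xi\odot\eta=\pi_!\pi^*$ applied to something, or more precisely $\xi\odot\eta$ lies in the image of $\pi_!\colon H^*(\widetilde T)\to H^*(T)$; (3) apply the projection formula $\pi_!(\pi^*(w)\smile y)=w\smile\pi_!(y)$ together with the fact that $\pi^*(w)=0$ (the defining class of a double cover restricts to zero on the cover) to conclude $w\smile\pi_!(y)=\pi_!(0)=0$; take $y$ with $\pi_!(y)=\xi\odot\eta$.

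The main obstacle is making the mock-bundle transfer $\pi_!$ and the projection formula rigorous in the mock-bundle / codimension-$2$-singularity model of $H^*(-;Z_2)$ used here, rather than just citing it for singular cohomology — one must check that "restrict the family over $\widetilde T$, then quotient by the free $Z_2$" is exactly the mock-bundle incarnation of $\pi_!$, and that cup product by $\sigma_K^*(c)$ is modelled by pulling back along $\sigma_K$ and intersecting, so that the geometric triviality "$\pi^*w=0$ because $\widetilde T\to T$ trivializes its own classifying cover" translates into an honest bordism of mock bundles. Everything else — identifying $\widetilde T$, checking the $Z_2$-action is free, checking the swap is the deck transformation — is routine. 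Alternatively one avoids transfer entirely: pull the whole situation back to $\widetilde T=K\times K\times S^n$, where $\sigma_K^*(c)$ becomes $0$ and $\xi\odot\eta$ becomes $\xi\times\eta\times 1+\eta\times\xi\times 1$ (a genuine class), so the product is $0$ upstairs; then push down and use that $\pi_!$ of $0$ is $0$, with the projection formula doing the bookkeeping. I would present the argument in this second form, as it keeps the geometry of the mock bundles most transparent.
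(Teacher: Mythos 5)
Your argument is correct, but it is packaged differently from the paper's. The paper proves the lemma by writing down an explicit null-bordism: the class $(\xi\odot\eta)\smile\sigma_K^*(c)$ is represented by $\alpha=(\xi\times\eta\times S^{n-1}+\eta\times\xi\times S^{n-1})/Z_2$ (the equator $S^{n-1}\subset S^n$ realizing the cup product with $c$), and this is exhibited as the boundary of $\beta=(\xi\times\eta\times H^{+}+\eta\times\xi\times H^{-})/Z_2$, where $H^{\pm}$ are the two half-spheres. You instead identify $\widetilde T=K\times K\times S^n\to T$ as the double cover classified by $w=\sigma_K^*(c)$, observe that $\xi\odot\eta=\pi_!(\xi\times\eta)$ lies in the image of the transfer, and invoke the projection formula together with $\pi^*(w)=0$. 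Both mechanisms are sound, and in fact they are the same geometry in different clothing: the null-homotopy witnessing $\pi^*(w)=0$ on $\widetilde T$ is exactly ``the equator bounds the half-sphere $H^{+}$,'' and pushing the resulting bordism of $\pi^*(w)\smile(\xi\times\eta)$ down along $\pi$ reproduces the paper's $\beta$ verbatim. What your version buys is conceptual clarity and generality --- it shows that \emph{everything} in the image of the transfer from the double cover is annihilated by $c$, which also explains the analogous relation $\iota(\xi\times\eta)\smile\sigma_K^*(c)=0$ stated later in Section~\ref{ext-sq-sec}; what it costs is the overhead you yourself flag, namely setting up $\pi_!$ and the projection formula inside the mock-bundle model, which the paper avoids by just drawing the bounding bundle. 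That overhead is genuinely routine here (composition with the covering map is the mock-bundle transfer, and the projection formula is immediate for transverse products), so your proof goes through; your fallback formulation --- pull everything back to $\widetilde T$, note the product vanishes there, push the bordism down --- is precisely the paper's proof.
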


\begin{proof}
Consider the mock bundle 
$$
\alpha = \xi\times\eta\times S^{n-1} + \eta\times\xi\times S^{n-1},
$$
which has the natural $Z_2$-action, it represents $(\xi\odot\eta)\smile\sigma_K^*(c)$ after taking the quotient by the $Z_2$-action.

Now divide $S^n$ into the upper and the lower half-spheres $H^+$ and $H^-$. Consider the mock bundle (with boundary) 
$$
\beta = \xi\times\eta\times H^+ + \eta\times\xi\times H^-
$$
over $K\times K\times S^n$. The action of $Z_2$ on $\beta$ is defined by permuting the summands and the antipodal identification of $H^+$ and $H^-$. Now it is clear that $\alpha$ is the boundary of $\beta$, and $\alpha/Z_2$ is the boundary of $\beta/Z_2$. Hence it is zero in the cohomology, and the similar statement is true for the unoriented bordism.
\end{proof}

We have to introduce another operation.

\begin{defn}
Let $\xi :E(\xi)\to K$, $\eta : E(\eta)\to K$ be two mock bundles. Let $p_+,p_-$ be the north and the south poles of $S^n$. Denote the mock bundle over $(K\times K\times S^n)/Z_2$
$$
\iota(\xi\times\eta) = (\xi\times\eta\times\{p_+\} + \eta\times\xi\times\{p_-\})/Z_2.
$$
\end{defn}

It is obvious from the definition that we have relation
$$
\iota(\xi\times\eta)\smile \sigma_K^*(c) = 0,
$$
it is also obvious that 
$$
\iota(\xi\times\xi) = \Sqe\xi\smile \sigma_K^*(c)^n.
$$
Let us describe the $\smile$-multiplication of the Steenrod squares, $\odot$, and $\iota(\ldots)$ classes. The following formulas are obvious from the definition:
$$ 
(\xi\odot\eta)\smile(\zeta\odot\chi) = (\xi\smile\zeta)\odot(\eta\smile\chi) + (\xi\smile\chi)\odot(\eta\smile\zeta),
$$
$$
(\xi\odot\eta)\smile(\Sqe\zeta) = (\xi\smile\zeta)\odot(\eta\smile\zeta),
$$
$$
(\xi\odot\eta)\smile\iota(\zeta\odot\chi) = \iota((\xi\smile\zeta)\times(\eta\smile\chi)) + \iota((\xi\smile\chi)\times(\eta\smile\zeta)),
$$
$$
\Sqe\xi\smile\Sqe\eta = \Sqe(\xi\smile\eta),
$$
$$
\Sqe\xi\smile \iota(\eta\times\zeta) = \iota((\xi\smile\eta)\times(\xi\smile\zeta)),
$$
$$
\iota(\xi\times\eta)\smile\iota(\zeta\times\chi)=0.
$$
Now we can describe the structure of the cohomology $H^*( (K\times K\times S^n)/Z_2)$.

\begin{defn}
Consider a $Z_2$-algebra $A$ with linear basis $v_1,\ldots, v_n$. Denote $A\odot A$ the subalgebra of $A\otimes A$, invariant w.r.t. $Z_2$-action by permutation. The linear base of $A$ is 
$$
\{v_i\otimes v_i\}_{i=1}^n,\ \{v_i\otimes v_j+v_j\otimes v_i\}_{i<j}. 
$$
\end{defn}

\begin{defn}
Consider a $Z_2$-algebra $A$ with linear basis $v_1,\ldots, v_n$. Denote $\iota(A\otimes A)$ the quotient vector space $A\otimes A/(v_i\otimes v_j + v_j\otimes v_i)$. As $Z_2$-algebra it has zero multiplication.
\end{defn}

\begin{lem}
\label{ext-sq}
The maps $\Sqe$, $\odot$, map the algebra $H^*(K)\odot H^*(K)$ to $H^*( (K\times K\times S^n)/Z_2)$. The map $\iota$ maps $\iota(H^*(K)\otimes H^*(K))$ to $H^*( (K\times K\times S^n)/Z_2)$. The images of these maps generate the cohomology $H^*( (K\times K\times S^n)/Z_2)$.

The latter cohomology can be described as the quotient of $H^*(K)\odot H^*(K)\otimes Z_2[c]\oplus \iota(H^*(K)\otimes H^*(K))$ by the relations
$$
c^{n+1}=0,\ (\xi\odot\eta)\otimes c = 0, \Sqe\xi\otimes c^n = \iota(\xi\otimes\xi).
$$ 
the $c$ is the preimage of the hyperplane class in $H^1(\mathbb RP^n)$.
\end{lem}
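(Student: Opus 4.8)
Throughout, write $F_m=(K\times K\times S^m)/Z_2$ (so the lemma concerns $F_n$), let $c=\sigma_K^*(\text{hyperplane class})\in H^1(F_m)$, and let $N=\dim_{Z_2}H^*(K)$ with a homogeneous basis $v_1,\dots,v_N$. Denote by $A_n$ the algebra on the right‑hand side of the statement, i.e. $H^*(K)\odot H^*(K)\otimes Z_2[c]\oplus\iota(H^*(K)\otimes H^*(K))$ modulo the three relations $c^{n+1}=0$, $(\xi\odot\eta)c=0$, $\Sqe\xi\cdot c^n=\iota(\xi\otimes\xi)$. The product formulas preceding the statement show, together with the identity $\Sqe(a+b)=\Sqe a+\Sqe b+a\odot b$, that $\Sqe$ and $\odot$ define an algebra homomorphism $H^*(K)\odot H^*(K)\otimes Z_2[c]\to H^*(F_n)$; the map $\iota$ is additive in each variable and, by the $Z_2$ built into its definition, descends to $\iota(H^*(K)\otimes H^*(K))$; and the three relations hold in $H^*(F_n)$ — the first because $c$ is pulled back from $\mathbb RP^n$, the second by Lemma~\ref{odot-ann}, the third by the identity $\iota(\xi\times\xi)=\Sqe\xi\smile c^n$ noted before the statement. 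Thus we get a well‑defined algebra homomorphism $\Phi\colon A_n\to H^*(F_n)$, and the content of the lemma is exactly that $\Phi$ is bijective.

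The plan is to induct on $n$. For $n=0$ one has $F_0=K\times K$, $c=0$, and a direct inspection (the relation $\Sqe\xi=\iota(\xi\otimes\xi)$ glues the diagonal classes and the $\odot$‑classes recover the off‑diagonal ones) gives $\dim A_0=N^2=\dim H^*(K\times K)$ with $\Phi$ bijective. For the inductive step let $S^{n-1}\subset S^n$ be the $Z_2$‑invariant equator, so $F_{n-1}=\sigma_K^{-1}(\mathbb RP^{n-1})\subset F_n$. Since $Z_2$ interchanges the two open hemispheres, $F_n\setminus F_{n-1}\cong K\times K\times\mathbb R^n$; excising onto one closed hemisphere cell gives $H^*(F_n,F_{n-1})\cong H^{*-n}(K\times K)$. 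From the mock‑bundle descriptions one reads off the two maps in the long exact sequence of the pair: the restriction $r\colon H^*(F_n)\to H^*(F_{n-1})$ sends $\Sqe\xi$, $\xi\odot\eta$, $c$ to the corresponding classes built from $S^{n-1}$ and kills every $\iota(\xi\times\eta)$ (the poles $p_\pm$ are not on the equator); and $j^*\colon H^*(F_n,F_{n-1})\to H^*(F_n)$ sends $(\text{Thom class of the cell})\times(\xi\otimes\eta)$ to $\iota(\xi\times\eta)$ — gluing the hemisphere cell into $F_n$ identifies its two copies via the $Z_2$, placing $\xi\otimes\eta$ over one and $\eta\otimes\xi$ over the other and the two Thom classes at $p_+,p_-$ — so $j^*$ factors through $\iota(H^*(K)\otimes H^*(K))$.

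Surjectivity of $\Phi$ follows: from $0\to\im j^*\to H^*(F_n)\xrightarrow{\,r\,}\im r\to0$, the quotient $\im r$ is generated (by induction applied to $F_{n-1}$, together with the formulas for $r$) by the restricted $\Sqe$‑, $\odot$‑ and $c$‑classes, while $\im j^*=\ker r$ lies in the span of the $\iota$‑classes; hence $H^*(F_n)$ is spanned by the images of the generators. For injectivity one compares dimensions. From the presentation, $\dim A_n=N^2+Nn$: the $\Sqe v_i$ span a free $Z_2[c]/c^{n+1}$‑module of rank $N$, the $v_i\odot v_j$ and the off‑diagonal $\iota(v_i\times v_j)$ add $2\binom N2$ dimensions annihilated by $c$, and the relation $\Sqe v_i\cdot c^n=\iota(v_i\times v_i)$ glues the top of the free part to the remaining ``diagonal'' $\iota$‑classes. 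On the other hand $\dim H^*(F_n)=\dim\im r+\dim\im j^*$, where $\im j^*$ is the full span of the $\iota$‑classes, of dimension $\binom{N+1}2$, and a computation of the connecting map $\delta\colon H^*(F_{n-1})\to H^*(F_n,F_{n-1})\cong H^{*-n}(K\times K)$ on the $\iota$‑classes of $F_{n-1}$ (one finds $\delta(\iota(\xi\times\eta))$ corresponds to $\xi\otimes\eta+\eta\otimes\xi$) shows $\im r=\ker\delta$ is precisely the span of the restricted $\Sqe$‑, $\odot$‑ and $c$‑classes, of dimension $\dim H^*(F_{n-1})-\binom{N+1}2+N=N^2+Nn-\binom{N+1}2$ by the inductive value $\dim H^*(F_{n-1})=N^2+N(n-1)$. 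Adding up reproduces $\dim H^*(F_n)=N^2+Nn=\dim A_n$, so $\Phi$ is an isomorphism.

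The delicate part — where most of the work lies — is the precise bookkeeping of $j^*$ and $\delta$ near the top cell of $\mathbb RP^n$: one must verify that $j^*$ hits the \emph{whole} span of the $\iota$‑classes (so in particular that the diagonal classes $\iota(v_i\times v_i)=\Sqe v_i\cdot c^n$ are nonzero and linearly independent, even though they vanish under pullback along the double cover $K\times K\times S^n\to F_n$, which kills $c$), and that $\delta$ vanishes on exactly the diagonal $\iota$‑classes of $F_{n-1}$; these are facts about the cell structure of $(K\times K\times S^n)/Z_2$ over the top cell of $\mathbb RP^n$. Alternatively, and more transparently, one can run everything through the Serre spectral sequence of $\sigma_K\colon F_n\to\mathbb RP^n$ with coefficients in the $Z_2[Z_2]$‑module $H^*(K\times K)$: this module splits into $N$ trivial and $\binom N2$ free rank‑one summands, so by Shapiro's lemma $E_2$ has total dimension $N(n+1)+2\binom N2=N^2+Nn$, and the spectral sequence collapses because $\Sqe v_i$, $v_i\odot v_j$ and $\iota(v_i\times v_j)$ are permanent cocycles detecting all of $E_2$; collapse then yields the dimension count and surjectivity simultaneously, hence the isomorphism.
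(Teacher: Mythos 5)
Your proposal is correct, and the ``alternative'' you sketch in your last paragraph is in fact the paper's own proof: the paper works directly with the Leray--Serre spectral sequence of $\sigma_K$, splits the local system $\mathcal H^*(K\times K)$ into $N$ constant summands (spanned by the $v_i\otimes v_i$) and $\binom{N}{2}$ summands induced from the double cover $S^n\to\mathbb RP^n$ (spanned by the pairs $v_i\otimes v_j$, $v_j\otimes v_i$, with $H^*(\mathbb RP^n,\mathcal A)=H^*(S^n)$), and then argues collapse at $E_2$ because every generator of $E_2$ is detected by one of the globally defined classes $\Sqe v_i$, $v_i\odot v_j$, $\iota(v_i\times v_j)$ --- exactly your Shapiro-plus-permanent-cocycles count $N(n+1)+2\binom{N}{2}=N^2+Nn$. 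Your main argument, by contrast, unrolls the same filtration by hand: induction on $n$ via the pair $(F_n,F_{n-1})$, excision onto the top cell giving $H^*(F_n,F_{n-1})\cong H^{*-n}(K\times K)$, and a dimension count through the long exact sequence. This is a genuinely different presentation and has the virtue of making geometrically visible why the $\iota$-classes span $\ker r$ and why only the symmetric tensors survive $j^*$; its cost is precisely the point you flag yourself, namely that the formulas $j^*(\xi\otimes\eta)=\iota(\xi\times\eta)$ and $\delta(\iota(\xi\times\eta))=\xi\otimes\eta+\eta\otimes\xi$ over the top cell are asserted rather than derived, and the entire linear-independence bookkeeping hangs on them. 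The paper hides the analogous subtlety in the equally terse claim that the last column supports no differentials ``from dimension considerations,'' so both routes sit at the same level of rigor; if you keep the long-exact-sequence version as the primary argument, the computation of $\delta$ on the $\iota$-classes is the one step you should actually write out.
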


Note the important particular case: if $n\to\infty$, we image of $\iota(\ldots)$ disappears, and we also can take the quotient of $H^*(K)\odot H^*(K)$ by the linear span of all $\xi\odot\eta$ for $\xi,\eta\in H^*(K)$. Hence, the cohomology $H^*((K\times K\times S^\infty)/Z_2)$ has a quotient isomorphic to $\Sqe(H^*(K))\otimes Z_2[c]$. Here $\Sqe(H^*(K))$ is the same algebra as $H^*(K)$, but with twice larger degrees.  

\begin{proof}
The Leray-Serre spectral sequence for $\sigma_K$ starts with 
$$
E_2^{p,q} = H^p(\mathbb RP^n, \mathcal H^q(K\times K)).
$$
Let us describe the sheaf $\mathcal H^*(K\times K)$. If  $v_1,\ldots, v_n$ is the linear basis of $H^*(K)$, then an element $v_i\otimes v_i$ gives a subsheaf, isomorphic to the constant sheaf $Z_2$. The two elements $v_i\otimes v_j$ and $v_j\otimes v_i$ generate a non-constant sheaf $\mathcal A = Z_2\oplus Z_2$ with permutation action of $\pi_1(\mathbb RP^n)$. The cohomology $H^*(\mathbb RP^n,\mathcal A) = H^*(S^n, Z_2)$, since $\mathcal A$ is the direct image of $Z_2$ under the natural projection $\pi: S^n\to \mathbb RP^n$. Thus we know the structure of $E_2^{*,*}$.

The first column of $E_2$ is the $Z_2$-invariant elements of $H^*(K\times K)$, and all these elements are the restrictions of either $\Sqe\xi$ or $\xi\odot \eta$ to the fiber. Hence all the differentials of the spectral sequence are zero on the first column. The columns between the first and the last ($n$-th) are generated by multiplication with $c$, and the differentials are zero on them too. The last column is isomorphic to $\iota(H^*(K)\otimes H^*(K))$, the differentials are zero on it from the dimension considerations.

Hence in this spectral sequence $E_2=E_\infty$. Denote $v_1,\ldots v_n$ the linear base of $H^*(K)$. The first column of $E_2$ has the linear base 
$$
\{v_i\times v_i\}_{i=1}^n,\ \{v_i\times v_j+ v_j\times v_i\}_{i<j},
$$
the columns No. $j=1,2,\ldots,n-1$ have the linear base
$$
\{(v_i\times v_i)c^j\}_{i=1}^n,
$$ 
and the last column has the linear base
$$
\{\iota(v_i\times v_j)\}_{i,j=1}^n.
$$
From the definition of $\Sqe$, $\odot$, and $\iota(\ldots)$ the final cohomology $H^*( (K\times K\times S^n)/Z_2)$ is described the same way with $v_i\times v_i$ replaced by $\Sqe v_i$, and $v_i\times v_j+ v_j\times v_i$ replaced by $v_i\odot v_j$.
\end{proof}

Now consider a vector bundle $\nu : E(\nu)\to K$ and define 
$$
\Sqe\nu : (E(\nu)\times E(\nu)\times S^n)/Z_2\to (K\times K\times S^n)/Z_2.
$$
The Stiefel-Whitney classes of $\Sqe\nu$ are described by the following lemma.

\begin{lem}
\label{sw-sq}
Let $\dim \nu = k$, and let the Stiefel-Whitney class of $\nu$ be 
$$
w(\nu) = w_0+w_1+\ldots + w_k.
$$
Then
$$
w(\Sqe\nu) = \sum_{0\le i < j \le k} w_i\odot w_j + \sum_{i=0}^k (1+c)^{k-i} \Sqe w_i,
$$
where $c$ is the image of the hyperplane class in $H^1(\mathbb RP^n)$.
\end{lem}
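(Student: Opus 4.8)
The plan is to compute $w(\Sqe\nu)$ by pulling back to the total space of the bundle $\sigma_K$ restricted to a fiber, where the structure simplifies, and then use the description of $H^*((K\times K\times S^n)/Z_2)$ from Lemma~\ref{ext-sq} to lift the answer. First I would observe that, by construction, $\Sqe\nu$ fits into a fiberwise picture: over $\mathbb{RP}^n$ the bundle $(E(\nu)\times E(\nu)\times S^n)/Z_2$ is the Borel construction $E(\nu)\times E(\nu)\times_{Z_2} S^n$, and its restriction to each fiber of $\sigma_K$ is just $\nu\times\nu$. So by Whitney multiplicativity the fiber restriction of $w(\Sqe\nu)$ is $w(\nu)\times w(\nu)=\left(\sum_i w_i\times 1\right)\left(\sum_j 1\times w_j\right)$, whose $Z_2$-invariant part is $\sum_i w_i\otimes w_i + \sum_{i<j}(w_i\otimes w_j+w_j\otimes w_i)$. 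Under the identification in Lemma~\ref{ext-sq} this is exactly $\sum_i \Sqe w_i|_{\text{fiber}} + \sum_{i<j} w_i\odot w_j|_{\text{fiber}}$, so the claimed formula is correct modulo terms divisible by $c=\sigma_K^*(\text{hyperplane class})$.

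Next I would pin down the $c$-divisible correction terms. By Lemma~\ref{odot-ann} any class of the form $\xi\odot\eta$ is annihilated by $c$, so no further $\odot$-contributions can appear; the only freedom is in the coefficients of $c^r\,\Sqe w_i$. To fix these I would use naturality and the splitting principle: if $\nu=\lambda_1\oplus\cdots\oplus\lambda_k$ is a sum of line bundles with $w(\lambda_a)=1+x_a$, then $\Sqe\nu=\bigoplus_a \Sqe\lambda_a$ (the external square commutes with direct sums of bundles, in contrast to its failure of additivity on cohomology classes, which is the whole point of the $\odot$ correction), so $w(\Sqe\nu)=\prod_a w(\Sqe\lambda_a)$. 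For a single line bundle one computes $w(\Sqe\lambda)$ directly: $(E(\lambda)\times E(\lambda)\times S^n)/Z_2\to(K\times K\times S^n)/Z_2$ is a rank-$2$ bundle, and a short Gysin/Euler-class computation — using that the sphere bundle is $(\lambda\times\lambda\times S^n)$ with the diagonal $Z_2$ — gives $w(\Sqe\lambda)=1+c+\Sqe x_a = (1+c)+\Sqe x_a$, i.e. $w_0=1$, $w_1=\text{(stuff)}$ arranged so that $w(\Sqe\lambda)=(1+c)^{1}\cdot 1 + (1+c)^0\Sqe x_a$ matches the claimed formula for $k=1$. Multiplying over $a$ and expanding, the $\Sqe$-part of $w(\Sqe\nu)=\prod_a\left((1+c)+\Sqe x_a\right)$ is $\sum_i (1+c)^{k-i}\,e_i(\Sqe x_1,\ldots,\Sqe x_k)=\sum_i(1+c)^{k-i}\Sqe w_i$, since $\Sqe$ is multiplicative and $w_i=e_i(x_1,\ldots,x_k)$; while the cross terms between different factors, which are not divisible by $c$ and involve distinct line-bundle classes, reassemble into $\sum_{i<j} w_i\odot w_j$ by the multiplication rules for $\Sqe$ and $\odot$ listed before Lemma~\ref{ext-sq}.

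The main obstacle, I expect, is making the last reassembly step rigorous: a priori the splitting principle only lets one verify an identity after pulling back along a map that is injective on cohomology, and one must check that both sides of the claimed formula are polynomials in the $w_i$, $\Sqe w_i$, $w_i\odot w_j$ and $c$ in a way compatible with that pullback — in other words, that the subalgebra generated by these classes is detected on flag bundles. This is where Lemma~\ref{ext-sq} does the real work: it tells us precisely which monomials in $c$, $\Sqe(-)$, $\odot$ and $\iota(-)$ can occur, and since $w(\Sqe\nu)$ lies in the subalgebra generated by $\Sqe(H^*K)$, $\odot$, and $c$ (no $\iota$-terms, as $\Sqe\nu$ is pulled back from a genuine bundle and $\iota$-classes are concentrated at the poles), its coefficients are determined by their fiber restrictions together with their behaviour under multiplication by $c$ — both of which we have computed. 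A secondary technical point is the single-line-bundle Euler class computation; I would do this by restricting to $S^n/Z_2=\mathbb{RP}^n$ over a point of $K\times K$ on the diagonal, where $(\mathbb{R}^1\times\mathbb{R}^1\times S^n)/Z_2$ is the Whitney sum of the tautological line bundle on $\mathbb{RP}^n$ (contributing the $(1+c)$ factor) and a trivial piece twisted to give $\Sqe$ of the line-bundle class.
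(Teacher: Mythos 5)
Your overall strategy is the same as the paper's: pin down the answer modulo $c$ by restricting to the fiber $K\times K$, reduce to line bundles by the splitting principle, and settle the line-bundle case by restricting to the section $s:\mathbb RP^n\to(\{x\}\times\{x\}\times S^n)/Z_2$ over a point of $K$. However, your key formula for a line bundle is wrong, and the error is not cosmetic. You claim $w(\Sqe\lambda)=1+c+\Sqe x_a$, but the restriction of $\Sqe\lambda$ to the fiber $K\times K$ is $p_1^*\lambda\oplus p_2^*\lambda$, whose total class is $1+(x_a\times 1+1\times x_a)+x_a\times x_a$; in particular $w_1(\Sqe\lambda)$ restricts to $x_a\times 1+1\times x_a\neq 0$, so $w_1(\Sqe\lambda)$ cannot equal $c$ (whose fiber restriction vanishes). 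The correct value, consistent with your own first paragraph and with the statement of the lemma at $k=1$ (which reads $1\odot w_1+(1+c)+\Sqe w_1$), is $w(\Sqe\lambda)=1+c+1\odot x_a+\Sqe x_a$; this is what the paper obtains by combining the fiber restriction with the section restriction $s^*(\Sqe\lambda)\cong\gamma\oplus\epsilon$.

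This error then sinks the final step. With your factors $(1+c)+\Sqe x_a$, the product $\prod_a\bigl((1+c)+\Sqe x_a\bigr)$ contains no $\odot$-classes at all: products of $\Sqe$-classes are $\Sqe$-classes by multiplicativity, and every term is of the form $(1+c)^{k-i}\Sqe(\prod_{a\in S}x_a)$. So there are no ``cross terms'' available to ``reassemble into $\sum_{i<j}w_i\odot w_j$''; that sentence asserts something your computation cannot produce, and it contradicts the fiber restriction you correctly computed in your first paragraph. Once the factor is corrected to $1+c+1\odot x_a+\Sqe x_a$, the expansion does work, but the reassembly is still not automatic: you must use the multiplication rules $(\xi\odot\eta)\smile\Sqe\zeta=(\xi\zeta)\odot(\eta\zeta)$, $(\xi\odot\eta)\smile(\zeta\odot\chi)=(\xi\zeta)\odot(\eta\chi)+(\xi\chi)\odot(\eta\zeta)$, $(\xi\odot\eta)c=0$, and crucially the non-additivity relation $\Sqe(\xi+\eta)=\Sqe\xi+\Sqe\eta+\xi\odot\eta$ to convert sums of $\Sqe x_a$-monomials into $\Sqe w_i$ plus $\odot$-corrections. (The paper is also terse here --- ``the result follows by removing parentheses'' --- but it starts from the correct one-dimensional formula, so the expansion is at least well posed.) A workable repair of your argument is available within your own framework: your paragraph~1 already determines the $c^0$-component, including all $\odot$-terms, and the product computation taken modulo the ideal generated by $\odot$- and $\iota$-classes determines the $c$-divisible part; but as written the two halves of your proposal are inconsistent with each other.
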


\begin{proof}
Consider the case of one-dimensional $\nu$ first. Taking $n$ large enough we do not have to consider the image of $\iota(\ldots)$, then we can return to lesser $n$ by the natural inclusion 
$$
(K\times K\times S^n)/Z_2\to (K\times K\times S^{n+m})/Z_2. 
$$
The restriction of $\Sqe\nu$ to the fiber $K\times K$ has the Stiefel-Whitney class
$$
w(\nu\times\nu) = 1 + w_1(\nu)\times 1 + 1\times w_1(\nu) + w_1(\nu)\times w_1(\nu).
$$
Hence $w(\Sqe\nu)$ is either $1 + w_1(\nu)\odot 1 + \Sqe w_1(\nu)$, or $1 + w_1(\nu)\odot 1 + c + \Sqe w_1(\nu)$. Any point $x\in K$ gives a natural section
$$
s : S^n/Z_2\to (\{x\}\times\{x\}\times S^n)/Z_2
$$
of the bundle $\sigma_K$, and the bundle $s^*(\Sqe\nu)$ over $\mathbb RP^n$ is isomorphic to $\gamma\oplus\epsilon$, where $\gamma$ is the canonical bundle of the projective space, $\epsilon$ is the trivial bundle. Hence we should have 
$$
w(\Sqe\nu) = 1 + w_1(\nu)\odot 1 + c + \Sqe w_1(\nu).
$$

The general formula for $k>1$ follows from the splitting principle, suppose $\nu=\tau_1\oplus\dots\oplus\tau_k$, then
$$
w(\Sqe\nu) = \prod_{i=1}^k (1 + w_1(\tau_i)\odot 1 + c + \Sqe w_1(\tau_i)),
$$
and the result follows by removing parentheses.
\end{proof}

\section{The proof of Lemmas~\ref{index-odd} and \ref{index-2l}}

In order to calculate the index of $G_{2n}^n$, we describe the cohomology of $G_{2n}^n/Z_2$. Consider the subgroup $G=O(n)\wr Z_2$ of $O(2n)$, that is generated by two copies of $O(n)$ for some decomposition $\mathbb R^{2n}=\mathbb R^n\oplus\mathbb R^n$, and by the operator $\sigma$ that interchanges the summands of the decomposition. It is clear that $G_{2n}^n/Z_2 = O(2n)/G$.

The cohomology of $BO(n)$ is the polynomial algebra in Stiefel-Whitney classes
$$
H^*(BO(n)) = Z_2[w_1,\ldots, w_n].
$$
The group cohomology $H^*(BG)$ (by Lemma~\ref{ext-sq}) is generated by the external Steenrod squares $\Sqe w_1,\ldots, \Sqe w_n$, the generator $c\in H^1(BZ_2)$, and some combinations $x\odot y$ for $x,y\in H^*(BO(n))$, the relations are $(x\odot y)c=0$.

Let us find the kernel of the natural map $\pi^*: H^*(BG)\to H^*(O(2n)/G)$. The cohomology $H^*(O(2n)/G)$ can be calculated by considering the Leray-Serre spectral sequence with the term $E^{p,q}_2 = H^p(BG, \mathcal H^q(O(2n)))$, see~\cite[Section~11.4]{mcc2001}. The kernel of $\pi^*$ is given by the images of the differentials $d_r$ of this spectral sequence in its bottom row.

Note that the action of $G$ on $O(2n)$ is induced by the inclusion $G\in O(2n)$, and the cohomology of $O(2n)$ is acted on by $G$ through its factor group $G/G^+$ of order $2$. Here $G^+$ denotes the elements of $G$ with positive determinant. Hence we can replace $G$ by $G^+$ and simultaneously  pass from the sheaf $\mathcal H^q(O(2n))$ to the cohomology $H^q(SO(2n))$ (see~\cite{bro1982}, Ch.~III, Proposition~6.2), thus obtaining
$$
E^{p,q}_2 = H^p(BG, \mathcal H^q(O(2n))) = H^p(BG^+, H^q(SO(2n))).
$$
In order to find the images of $d_r$'s, note that the fiber bundle 
$$
\begin{CD}
SO(2n) @>>> SO(2n)\times_G EG^+\\
@. @VVV\\
@. BG^+
\end{CD}
$$
is induced from the fiber bundle 
$$
\begin{CD}
SO(2n) @>>> ESO(2n)\\
@. @VVV\\
@. BSO(2n)
\end{CD}
$$
by the inclusion $G^+\to SO(2n)$. In the spectral sequence of the latter fiber bundle all the primitive generators of $H^*(SO(2n))$ are transgressive. They are mapped to the bottom row by the corresponding differentials $d_r$, their images being the Stiefel-Whitney classes of $O(2n)$. Thus, in the considered spectral sequence, the differentials $d_r$ are generated by the transgressions that send the primitive generators of $H^*(SO(2n))$ to the Stiefel-Whitney classes of the representation of $G^+$ on $\mathbb R^{2n}$. Denote this representation $W_{2n}$.

Let us summarize as follows. 

\begin{lem}
\label{ideal-ind}
The kernel of the natural map $\pi^* : H^*(BG)\to H^*(O(2n)/G)$ is generated by the homogeneous components of positive degree of the expression
$$
\sum_{0\le i < j \le n} w_i\odot w_j + \sum_{i=0}^n (1+c)^{n-i} \Sqe w_i.
$$
\end{lem}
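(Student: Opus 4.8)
The plan is to assemble the ingredients prepared above; the only genuinely new observation is that the tautological $2n$-dimensional representation of $G=O(n)\wr Z_2$ is an external Steenrod square, after which Lemma~\ref{sw-sq} does the computation.

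First I would recall from the discussion preceding the statement that $O(2n)/G\cong SO(2n)/G^+$, that the classifying map of the principal $G$-bundle $O(2n)\to O(2n)/G$ factors through $BG^+$, and that in the Leray--Serre spectral sequence of $SO(2n)\to SO(2n)/G^+\to BG^+$ the kernel of the edge homomorphism $e^*\colon H^*(BG^+)\to H^*(O(2n)/G)$ is the ideal generated by the transgressions. The mod $2$ cohomology of $SO(2n)$ is additively generated by transgressive classes which, in the universal bundle $SO(2n)\to ESO(2n)\to BSO(2n)$, transgress to $w_2,\dots,w_{2n}$ of the universal $2n$-plane bundle; since the bundle above is induced from the universal one by the classifying map of the representation $W_{2n}$ of $G^+$ on $\mathbb R^{2n}$, this kernel is the ideal generated by the Stiefel--Whitney classes $w_1(W_{2n}),\dots,w_{2n}(W_{2n})$ (here $w_1(W_{2n})=0$, as $W_{2n}$ takes values in $SO(2n)$).

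Next I would identify the representation. Taking $K=BO(n)$ and letting $n\to\infty$ in the construction of Section~\ref{ext-sq-sec}, the space $(K\times K\times S^\infty)/Z_2$ is a model of $BG$, and under this identification the $G$-representation $\mathbb R^{2n}=\mathbb R^n\oplus\mathbb R^n$ is realized by the bundle $\Sqe\nu$, where $\nu$ is the tautological $n$-plane bundle over $BO(n)$. Hence Lemma~\ref{sw-sq} with $k=n$ gives
$$
w(\Sqe\nu)=\sum_{0\le i<j\le n}w_i\odot w_j+\sum_{i=0}^n(1+c)^{n-i}\Sqe w_i\in H^*(BG).
$$
Let $\theta_1,\dots,\theta_{2n}$ be its homogeneous components of positive degree and $I=(\theta_1,\dots,\theta_{2n})\subseteq H^*(BG)$ the ideal they generate. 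Restriction $j^*\colon H^*(BG)\to H^*(BG^+)$ along $j\colon BG^+\hookrightarrow BG$ sends $\theta_m$ to $w_m(W_{2n})$, so by the previous paragraph $\ker e^*=j^*(I)$, while $\pi^*=e^*\circ j^*$.

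Finally I would pass from $BG^+$ back to $BG$. The double cover $BG^+\to BG$ is classified by the character $G\to G/G^+=Z_2$, i.e.\ the determinant of $W_{2n}$, so by the Gysin sequence of this $0$-sphere bundle the kernel of $j^*$ is the principal ideal generated by $\theta_1=w_1(\Sqe\nu)$; in particular $\ker j^*\subseteq I$. Therefore $\ker\pi^*=(j^*)^{-1}(j^*(I))=I+\ker j^*=I$, which is the assertion. (The inclusion $I\subseteq\ker\pi^*$ can also be checked directly, bypassing the spectral sequence: the vector bundle associated to $O(2n)\to O(2n)/G$ via $W_{2n}$ is canonically trivial over $O(2n)/G$, so each $\theta_m$ pulls back to $0$ there.) The one step that needs care is this last one — that enlarging the group from $G^+$ to $G$ does not enlarge the kernel — and it goes through precisely because $\theta_1$ is itself among the generators listed in the statement.
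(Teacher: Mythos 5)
Your proposal is correct and follows essentially the same route as the paper: identify the kernel via the Leray--Serre spectral sequence of $SO(2n)\to SO(2n)/G^+\to BG^+$ with transgressions hitting the Stiefel--Whitney classes of $W_{2n}$, recognize $W_{2n}$ as $\Sqe$ of the tautological bundle, and apply Lemma~\ref{sw-sq}. Your explicit Gysin-sequence justification that passing from $G^+$ back to $G$ only adds the principal ideal $(\theta_1)$, which is already contained in $I$, is a welcome sharpening of the step the paper dispatches with the phrase that this passage ``kills'' $w_1(W_{2n})$ and the ideal it generates.
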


\begin{proof}
In the bottom row of the spectral sequence passing from $H^*(BG)$ to $H^*(BG^+)$ ``kills'' the element $w_1(W_{2n})$ and the ideal generated by it. The other differentials ``kill'' the other classes $w_r(W_{2n})$ by the above considerations.

It remains to calculate the Stiefel-Whitney classes of $W_{2n}$. Remind that by the Stiefel-Whitney classes of a representation we mean the Stiefel-Whitney classes of the vector bundle $\eta : (W_{2n}\times EG)/G\to BG$. Denote $V_n$ the natural representation of $O(n)$, and consider its corresponding bundle $\xi : (V_n\times EO(n))/O(n) \to BO(n)$. It can be checked by definition that $\eta=\Sqe\xi$ and the claim follows by applying Lemma~\ref{sw-sq}.
\end{proof}

Now the proof of Lemma~\ref{index-odd} is finished as follows: we have to find the nilpotency degree of $c$ in $H^*(BG)/\ker\pi^*$. If $n$ is odd, then the one-dimensional generator of $\ker \pi^*$ is 
$$
c + w_1\odot 1,
$$
hence $c\not=0$, $c^2=0$ by Lemma~\ref{odot-ann}, and $\hind G_{2n}^n=1$ in this case. 

If $n=2\mod 4$, then we have the relations in dimensions $2$ and $3$
\begin{eqnarray*}
c^2+\Sqe w_1 + 1\odot w_2 &=& 0\\
c\Sqe w_1 + 1\odot w_3 + w_1\odot w_2 = 0.
\end{eqnarray*}
Substituting $\Sqe w_1 = c^2 + 1\odot w_2$ from the first relation to the second we obtain
$$
c^3 = 1\odot w_3 + w_1 \odot w_2,
$$
hence $c^4 = 0$ by Lemma~\ref{odot-ann}, and $\hind G_{2n}^n=3$ in this case.

Now let us turn to Lemma~\ref{index-2l}. Let $n=2^l$, and let us add the additional relations of the form $w_i = 0$ for all $i$ except $i= 2^l-2^k$ ($k=0,\ldots, l$) and $i=2^l$. In this case the remaining relations in $\ker\pi^*$ are 
\begin{eqnarray*}
c^{2^l} &=& \Sqe w_{2^l-2^{l-1}} + 1\odot w_{2^l}\\
c^{2^{l-1}} \Sqe w_{2^l-2^{l-1}} &=& \Sqe w_{2^l-2^{l-2}} + w_{2^l-2^{l-1}}\odot w_{2^l}\\
&\dots&\\
c^2 \Sqe w_{2^l-2} &=& \Sqe w_{2^l-1} + w_{2^l-2}\odot w_{2^l}\\
c \Sqe w_{2^l-1} &=& w_{2^l-1}\odot w_{2^l}\\
\Sqe w_{2^l} &=& 0,
\end{eqnarray*}
along with the relations of the form
$$
w_{2^l-2^k}\odot w_{2^l-2^m} = 0,\ 0\le k<m\le l.
$$
Thus we obtain $c^{2^{l+1}-1} = c^{2n-1} = w_{2^l}\odot w_{2^l-1}\not=0$. Also, we must have $c^{2n}=0$ by the upper bound $\hind G_{2n}^n\le 2n-1$, without any additional relations. Therefore, $\hind G_{2n}^n=2n-1$ in this case.

\end{document}